\theoremstyle{definition}
\newtheorem{thm}{Theorem}[section]
\newtheorem{lem}[thm]{Lemma}
\theoremstyle{definition}
\newtheorem{defn}[thm]{Definition}
\theoremstyle{remark}
\newtheorem{rem}[thm]{Remark}
\tiny\color{gray},
\theoremstyle{remark}
\numberwithin{equation}{section}
\g@addto@macro{\endabstract}{\@setabstract}
\newcommand{\authorfootnotes}{\renewcommand\thefootnote{\@fnsymbol\c@footnote}}%
\definecolor{checkcolor}{rgb}{0.95, 0.95, 0.95}
\newsavebox{\definitionbox}
		{\end{minipage}\end{lrbox}%
	\begin{center}{\colorbox{checkcolor}{\usebox{\definitionbox}}}%
	\end{center}}
\begin{document}
	\begin{center}
		\LARGE
		Bergman's Centralizer Theorem and quantization \par \bigskip
		
		\normalsize
		\authorfootnotes
		Alexei Kanel Belov\footnote{kanelster@gmail.com}
		, Farrokh Razavinia \footnote{f.razavinia@phystech.edu}
		Wenchao Zhang\footnote{whzecomjm@gmail.com}
		
		 Moscow Institute of Physics and Technology \par
	 Bar-Ilan Univeristy \par \bigskip
	
		
	\footnote{	This research was funded by the Russian Science Foundation (RNF) educational scholarship with number (\textit{No}17-11-01377). }\textsuperscript{1}
		\footnote{2010 Math Subject Class: Primary 16R50, 13A99  . ; Secondary 	53D55 .}
		\footnote{keywords: quantization, Kontsevich, Bergman theorem, generic matrices.}
	\end{center}
	
	\begin{abstract}
		We prove Bergman's theorem \cite{Berg69} on centralizers by using generic matrices and Kontsevich's quantization method. For any field
        $\textbf{k} $ of positive characteristics,  set $A=\textbf{k} \langle x_1,\dots,x_s\rangle$ be a free associative algebra, then any centralizer $\mathcal{C}(f)$ of nontrivial element $f\in A\hspace*{-0.1cm}\setminus \hspace*{-0.1cm} \textbf{k} \hspace*{ 0.1cm}$ is a ring of polynomials on a single variable. We also prove that there is no commutative subalgebra with transcendent degree $\geq 2$ of $A$.
	\end{abstract}
	
\section{Introdution}
Quantization ideas provides a new vision in some classical areas in mathematics including polynomial automorphisms and Jacobian conjecture. In the papers \cite{Belov05,Belov07} , Alexei Kanel Belov and Maxim Kontsevich used antiquantizations for to obtain a proof for to show the equivalence of the Jacobian conjecture and the Dixmier conjecture. And P. S. Kolesnikov\cite{Kole00,Kole01} reproved the famous Makar-Limanov theorem on the construction of algebraically closed skew fields by using formal quantization by using the formal quantization. 

In the series of papers\cite{Art78,Art84,Art91,Art98,Art08},  V. A. Artamonov discovered relations of metabelian algebras and the Serre conjecture with quantization. Some of Artamonov's ideas had been used by U. Umirbaev\\ \cite{Umir12} for Jacobian conjecture in the case of metabelian algebras. And in 2007, Umirbaev in his classical paper \cite{Umir07}, used metabelian algebras in order to prove the famous Anick conjecture (it was second ingredient, the main ingredient was the famous theory of U.Umirbaev and I.Shestakov).

As to review some classical algebra and algebraic geometry, especially polynomial automorphisms from quantization point of view seems to be important for us, so in this paper; which we have devoted to the Bergman's theorem on centralizers of free associative algebras via quantization; we obtain the following theorem with somehow more general results:

\begin{thm}
	Suppose $\mathrm{\textit{tr.deg}}\textbf{k}\langle A,B\rangle=2$, for $\textbf{k}$ is a field with positive characteristics and $A, B$ are matrices over $\textbf{k}$. 
	
	Consider an arbitrary lifting $A$ and $B$ in $\mathrm{Mat}_n(\textbf{k}\langle x_1,\dots,x_s\rangle[[\mathfrak{h}]], *)$, then $\hat{A}$ and $\hat{B}$ do not commute. Moreover,
	\begin{equation}
	\frac{1}{\mathfrak{h}}[\hat{A},\hat{B}]_*=\begin{pmatrix} \frac{1}{\mathfrak{h}}\{\lambda_1,\mu_1\}& & 0 \\
	& \ddots & \\
	0 & &\frac{1}{\mathfrak{h}}\{\lambda_n,\mu_n\}\end{pmatrix}
	\end{equation}
	in the basis such that $A$ and $B$ can be upper diagonalized.
	
	Where for $i \in \{1, \dots , n \}$, $\lambda_i$  and $\mu_i$ are respectively the eigenvalues of $A$ and $B$ which will make the diagonal elements of the quantized $\hat{A}$ and $\hat{B}$. 
	
\end{thm}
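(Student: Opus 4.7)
The plan combines simultaneous triangularization of commuting matrices with the defining semiclassical property of Kontsevich's star product, $\hat f * \hat g - \hat g * \hat f = \mathfrak{h}\{f,g\} + O(\mathfrak{h}^2)$.

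First, I would use the hypothesis $\mathrm{tr.deg}\,\mathbf{k}\langle A,B\rangle = 2$ to conclude that the subalgebra generated by $A$ and $B$ is commutative; a non-commuting pair in $\mathrm{Mat}_n$ would yield a larger transcendence growth. Classical linear algebra over the algebraic closure then produces a common basis in which both $A$ and $B$ are upper triangular, with respective diagonals $(\lambda_1,\ldots,\lambda_n)$ and $(\mu_1,\ldots,\mu_n)$. After fixing this basis, everything is reduced to an entrywise calculation in the quantized algebra.

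Next, I would examine $[\hat A,\hat B]_*$ entry-by-entry. Because the star product on $\mathbf{k}\langle x_1,\ldots,x_s\rangle[[\mathfrak{h}]]$ is built from bidifferential cochains and is extended to matrices by the usual matrix product applied to each entry, upper triangular support is preserved at every order in $\mathfrak{h}$. Consequently the $(i,j)$-entry of $\hat A * \hat B$ vanishes for $i>j$, while in the sum defining the diagonal entry only the $k=i$ term survives, giving $(\hat A * \hat B)_{ii} = \hat\lambda_i * \hat\mu_i$. Symmetrically, $(\hat B * \hat A)_{ii} = \hat\mu_i * \hat\lambda_i$, so that
\[
\bigl([\hat A,\hat B]_*\bigr)_{ii} = \hat\lambda_i * \hat\mu_i - \hat\mu_i * \hat\lambda_i .
\]
Applying the semiclassical identity to each of these scalar commutators yields $\mathfrak{h}\{\lambda_i,\mu_i\} + O(\mathfrak{h}^2)$; dividing by $\mathfrak{h}$ and reading off the diagonal produces the block-diagonal formula claimed in the theorem. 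The non-commutativity assertion then follows, because $\mathrm{tr.deg}\,\mathbf{k}\langle A,B\rangle=2$ prevents all pairs $(\lambda_i,\mu_i)$ from being algebraically dependent, so at least one $\{\lambda_i,\mu_i\}$ must be non-zero.

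The main obstacle I anticipate is the entrywise preservation of upper triangular support under the star product: one must verify that the bidifferential cochains in Kontsevich's formal power series never introduce a non-trivial contribution below the diagonal. This is plausible because those cochains act on polynomial entries without mixing rows and columns of the ambient matrix, but making it rigorous requires a careful check of how the star product interacts with the embedding $\mathrm{Mat}_n(R)[[\mathfrak{h}]] \hookrightarrow \mathrm{Mat}_n(R[[\mathfrak{h}]])$. Once that support-compatibility is secured, the rest of the argument is essentially a formal consequence of the leading-order semiclassical expansion.
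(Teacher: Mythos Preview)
Your argument has a genuine gap, and it is not the one you flag at the end. The step that fails is the tacit assumption that the lifts $\hat A,\hat B$ are themselves upper triangular. The theorem concerns an \emph{arbitrary} lifting, so in the basis where $A,B$ are upper triangular one only knows
\[
\hat A = A + \mathfrak{h}\,C,\qquad \hat B = B + \mathfrak{h}\,D,
\]
with $C,D$ completely unconstrained matrices over $R[[\mathfrak{h}]]$. Expanding $[\hat A,\hat B]_*$ modulo $\mathfrak{h}^2$ gives, besides $[A,B]_*$, the cross terms $\mathfrak{h}\bigl([A,D]+[C,B]\bigr)$ in the ordinary matrix commutator. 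When $A$ is merely upper triangular these cross terms have no reason to have vanishing diagonal: for instance $(AD)_{ii}=\sum_{k\ge i}A_{ik}D_{ki}$ while $(DA)_{ii}=\sum_{k\le i}D_{ik}A_{ki}$, and the strictly upper part of $A$ makes these differ. Hence the diagonal of $\tfrac{1}{\mathfrak{h}}[\hat A,\hat B]_*$ is polluted at order $\mathfrak{h}^0$ by $[A,D]_{ii}+[C,B]_{ii}$, and your formula does not follow.

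The paper avoids this by working in a basis where $A$ and $B$ are simultaneously \emph{diagonal}, not just triangular; this is available because the minimal polynomials of $f,g$ are irreducible (the algebra of generic matrices is a domain), so the eigenvalues are distinct. With $A=\operatorname{diag}(\lambda_i)$ the commutator $[A,D]$ has $(i,i)$-entry $(\lambda_i-\lambda_i)D_{ii}=0$ for \emph{any} $D$, and likewise for $[C,B]$; thus the only contribution to the diagonal at order $\mathfrak{h}$ comes from $[\operatorname{diag}(\lambda_i),\operatorname{diag}(\mu_j)]_*$, which gives $\mathfrak{h}\{\lambda_i,\mu_i\}$ as claimed. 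Incidentally, the obstacle you anticipate (closure of upper triangular matrices under the entrywise $*$-product) is a non-issue: the matrix shape is governed by the index pattern of ordinary matrix multiplication and is unaffected by deforming the scalar product on entries. The real work is in arranging that the $\mathfrak{h}$-order perturbations $C,D$ cannot touch the diagonal, and for that triangularization is too weak.
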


\begin{rem}
	In theorem 1.1, $\mathrm{\textit{tr.deg}} \textbf{k} \langle A,B \rangle =2$ and the independency of $\lambda_i$'s and $\mu_i$'s for some $i$ will ensure that the Poisson bracket of $\lambda_i$ and $\mu_i$ is non zero. (In fact because the characteristic polynomial of $A$ and $B$ are both irreducible, so that's true for all $i$.) 
\end{rem}

But first of all let us to mention the famous Bergman's theorem:
\begin{thm}[The Bergman's centralizer theorem]
	The maximum commutative subalgebra of a free associative algebra $A$ is isomorphic to the ring of polynomials in one variable.
\end{thm}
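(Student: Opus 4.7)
The plan is to derive Bergman's theorem from Theorem 1.1 by embedding the free algebra into generic matrices and invoking the Kontsevich quantization over the positive-characteristic base $\textbf{k}$. A maximal commutative subalgebra $C\subseteq A=\textbf{k}\langle x_1,\dots,x_s\rangle$ is automatically the centralizer of each of its non-scalar elements, so the theorem reduces to establishing (i) $\operatorname{tr.deg}_{\textbf{k}} C\le 1$, and (ii) the upgrade of this bound to $C\cong\textbf{k}[p]$ for a single generator $p$.

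For (i), I argue by contradiction. Suppose $f,g\in C$ are algebraically independent commuting elements. By the Amitsur--Procesi faithfulness theorem, $A$ embeds into the algebra $R_n$ of $s$ generic $n\times n$ matrices for all $n$ sufficiently large, so the images $\bar f,\bar g\in R_n$ commute and satisfy $\operatorname{tr.deg}\textbf{k}\langle\bar f,\bar g\rangle=2$. I then pass to the Kontsevich-quantized algebra $\mathrm{Mat}_n(\textbf{k}\langle x_1,\dots,x_s\rangle[[\mathfrak{h}]],*)$ and compare two sources of liftings. On the one hand, Theorem 1.1 asserts that any liftings $\hat{\bar f},\hat{\bar g}$ satisfy $\tfrac{1}{\mathfrak{h}}[\hat{\bar f},\hat{\bar g}]_*=\operatorname{diag}(\tfrac{1}{\mathfrak{h}}\{\lambda_i,\mu_i\})$, which is non-zero by Remark 1.2. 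On the other hand, the canonical liftings induced from $A$ inherit the identity $[f,g]=0$ and so produce a vanishing commutator after reduction. This contradiction forces $\operatorname{tr.deg}_{\textbf{k}} C\le 1$.

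For (ii), one first observes that $C$ is integrally closed in $A$: any $u\in A$ integral over $C$ commutes with every element of $C$ (using its minimal polynomial relation together with the fact that $A$ is a domain), and hence lies in $C$ by maximality. Equipping $A$ with its standard grading makes $C$ a graded $\textbf{k}$-subalgebra whose homogeneous pieces $C\cap A_d$ are finite-dimensional, so $C$ is finitely generated. A finitely generated graded commutative $\textbf{k}$-domain of transcendence degree one, integrally closed in its field of fractions and with no non-constant units, is necessarily isomorphic to $\textbf{k}[p]$; an explicit generator can be taken to be an element of $C$ of minimal positive degree, and the reverse inclusion follows by a degree-induction using integral closure.

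The principal obstacle will lie in step (i), where one must verify that the canonical lifting of $f,g$ from $A$ into the quantized matrix algebra is compatible, at least to leading order in $\mathfrak{h}$, with the lifting framework underlying Theorem 1.1. This amounts to checking that Kontsevich's $*$-product specializes correctly along the generic matrix representation, so that the semiclassical limit recovers the Poisson bracket on the eigenvalue spectrum, and that the bracket $\{\lambda_i,\mu_i\}$ is genuinely non-vanishing whenever $\lambda_i,\mu_i$ are algebraically independent over $\textbf{k}$; the latter is the content of Remark 1.2, stemming from irreducibility of the characteristic polynomials of $\bar f$ and $\bar g$. Only with this compatibility established does the clash between the two liftings in step (i) yield the required contradiction.
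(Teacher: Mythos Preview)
Your part (i) matches the paper's strategy. One slip: $A$ does \emph{not} embed into the generic matrix algebra $R_n$ for any fixed $n$, since $R_n$ is PI while $A$ (for $s\ge2$) is not. The paper handles this more carefully in Lemma~2.3: for $n$ large the images $\bar f,\bar g$ remain algebraically independent, by combining Amitsur--Levitzki (which bounds from below the degree of any identity of $M_n$) with Amitsur's theorem that $R_n$ is a domain (which forces a putative minimal relation $P_n(\bar f,\bar g)=0$ to stabilize as $n$ grows and hence to already hold in $A$).

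Part (ii) contains a genuine gap. A maximal commutative subalgebra $C\subseteq A$ is \emph{not} a graded subalgebra in general: for $f=x+y^2\in\textbf{k}\langle x,y\rangle$ the centralizer of $f$ contains $f$, but neither homogeneous component $x$ nor $y^2$ commutes with $f$, so $C(f)$ is not closed under taking homogeneous parts. Once gradedness fails, your finiteness claim and the one-generator conclusion both collapse. The integral-closure step is likewise unjustified as written: an equation $u^n+c_{n-1}u^{n-1}+\cdots+c_0=0$ with $c_i\in C$ does not by itself force $u$ to commute with $C$, so maximality cannot be invoked. The paper, for its part, does not fill this gap either: after obtaining transcendence degree $1$ it simply asserts $C\cong\textbf{k}[z]$ and refers back to Bergman's original paper. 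A self-contained route goes through leading terms rather than gradedness --- one shows that the highest-degree homogeneous parts of two commuting elements of $A$ are powers of a common element (a combinatorial lemma on words), and then produces a single generator by induction on degree.
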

This theorem already has been proved in \cite{Berg69}, but we have to reprove it because of its importance for our ideas. And the main reason for to reprove it is to find its relation with quantization in Kontsevich sense.\\

The proof of the theorem 1.3 can be reduced by using the following steps:

\vspace*{0.18cm}
{\bf 1.} Fist suppose we have a commuting, then we can make a reduction to the generic matrices. And when we make this reduction then we will have two commuting generic matrices.

\begin{rem}
	And in this case by using the fact that when we come to the quantization of the generic matrices then those matrices are not allowed just to commute but to have no other relations, because otherwise we can consider the algebra generic over this ring with our generators as a generic matrix for every lifting!
\end{rem}

\vspace*{0.18cm}
{\bf 2.} Let $\mathbf R$ be the ring of polynomials and $A$ and $B$ matrices over $\mathbf R$ and suppose we have generators $X_{ij}$s and $Y_{ij}$s and  a polynomial over generic matrices.

 Now if we consider not polynomials over generic matrices but instead of $X_{ij}$ and $Y_{ij}$ we replace them by quantizations, i.e. not with elements from polynomial ring $R$ but with elements from quantized polynomial ring $\hat{R}$, then we will get two matrices which cannot commute if they have relation. And if they do not have relation apart of commuting, because the theorem 1.1 is not allowed because we work on a ring of generic matrices and if they in a ring of generic matrices commute without any relations then when we come to the quantization then we loose any relation of them and then we can consider instead of our free generators of free associative algebra, its homomorphisms on the algebra of generic matrices to the algebra over matrices over new quantization.    

\vspace*{0.18cm}
{\bf 3.} As it comes out by the definition of a generic matrix ring of size two which is a ring generated by generic matrices of order 2 and set\\

\hspace*{2.4cm}$X^1 = \begin{pmatrix} X_{11}^{1} & X_{12}^{1} \\ X_{21}^{1} &  X_{22}^{1} \end{pmatrix}$, $\dots$ , $X^s = \begin{pmatrix} X_{11}^{s} & X_{12}^{s} \\ X_{21}^{s} &  X_{22}^{s} \end{pmatrix};$

\vspace*{0.14cm}
such that the algebra generated by $X^s$'s is the algebra of generic matrices of order 2 and this is a subalgebra of matrix algebra of the ring of polynomials of $4s$ variables. So it is the algebra of generic matrices over the ring of polynomials.

\begin{rem}
	If we have free associative algebra over $a_1,\dots,a_s$, then we will have homomorphisms $\varphi$'s to $X^1,\dots,X^s$ from $a_i$s and it means that if $f$ and $g$ commute then $\varphi(f)$ and $\varphi(g)$ also commute.
\end{rem}

\vspace*{0.18cm}
{\bf 4.} Now let us put $X_{ij}^{l}$ instead of $X^i$'s and let us define $R := \textbf{k}[X_{ij}^{l}]$ and $\hat{R}$; for $\hat{R}$ to be the quantization of $R$. And this means that
if we have free associative polynomial algebras $f(a_1,\dots,a_s)$ and $g(a_1,\dots,a_s)$ over $a_1,\dots,a_s$ according to the remark 1.5, then we will get in matrix algebras $\hat{f}(\hat{X}_{1},\dots,\hat{X}_s)$ and $ \hat{g}(\hat{X}_{1},\dots,\hat{X}_s)$.

In other words, if we suppose the associative algebra $\textbf{k}\langle X_1, \dots,X_s\rangle$ then from the other hand we will have $\textbf{K}\langle \hat{M}_1,\dots,\hat{M}_s \rangle$ for $\hat{M}^l = (\hat{X}_{ij}^{l})$ for $\hat{X}_{ij}^{l}$  from the ring of quantization of the ring $R$ i.e. $\hat{R}$ and with modular $\mathfrak{h}$ will be matrices over $X_{ij}$.

\vspace*{0.18cm}
{\bf 5.} Now suppose that all the previous steps are satisfying for any $n$ relations in generic matrices.

\begin{rem}
	 Here relations are some polynomials $P_n$. 
\end{rem}

\vspace*{0.18cm}
{\bf 6.} Now suppose for any $n$ and for any $f ,g$ as before, we have $P_n(f , g) = 0$.

 Then for any $n_1 > n_2$ we have that $P_{n_1}$ will contain  $n_2 \times n_2$ non generic matrices; and because it is known that when it does not have zero divisors, it means that all of them are irreducible and there is minimal relations between generic matrices and then if $n_1 > n_2$ then the relation $P_{n_1}$ is divisible by $P_{n_2}$ and hence because of irreducibility they must coincide on some step which means that they have relation for all $n$'s and this provides that they do not have relation at all!
 
  Because if $n$ is big then all the identities of $n \times n $ matrices and hence minimum identity has degree $\frac{n}{2}$ and then the degree of this polynomial if we substitute instead of $f$ and $g$ times maximum degree of $f$ and $g$, so if we consider two big matrices then we do not have relations and it means that it is impossible that in a free associative algebra we do not have relation but in any size of matrices we have relation! 

A contradiction!\\

And this is the situation which occurs in quantization.	

\vspace*{0.38cm}
{\bf 7.} And then we need to prove that for $n \times n$ polynomials we cannot have any relations but just free commutative algebra of transcendent degree 2 and then Theorem 1.1 does not allow us to have this, because of the famous Bergman theorem about centralizers!

\vspace*{0.18cm}
{\bf 8.} Now in this step let us look at it from an another point of view:

Consider the quantization ring $\hat{R}$ and suppose for all $i,j$ we have $\hat{X}_{ij}^{l} \cong X_{ij}^{l}$ mod $\mathfrak{h}$ in $\hat{R}$, then two matrices $\hat{A}$ and $\hat{B}$ does not commute anymore!

\begin{rem}
	This is enough for Bergman's theorem.
\end{rem}

Now let's see how theorem 1.1 implies theorem 1.3.

\vspace*{0.18cm}
{\bf 9.}  We want to prove that two elements cannot commute.

Suppose we have a free associative algebra $\textbf{k}\langle X_1,\dots,X_s \rangle$ such that $f,g \in \textbf{k}\langle X_1,\dots,X_s\rangle$ and suppose $\textbf{k}\langle f,g \rangle \simeq \textbf{k}[y,z]$, i.e. they commute but are algebraically independent.

\vspace*{0.18cm}
{\bf 10.} And now if we consider the reduction to the generic matrices: 

Then we have $\textbf{k}   \langle   A_{1}^{n_i},\dots,A_{s}^{n_i}   \rangle $ for $\hat{f}_{n_i},\hat{g}_{n_i} \in \textbf{k} \langle A_{1}^{n_i},\dots, A_{s}^{n_i} \rangle $. And hence $\textbf{k} \langle  X_1,\dots,X_s \rangle $ naturally maps in $\textbf{k} \langle A_{1}^{n_i},\dots,A_{s}^{n_i} \rangle $ and also $f,g$ maps to $\hat{f}_{n_i} , \hat{g}_{n_i}$ respectively and because we had $[f_{n_i},g_{n_i}] = 0$ so we will have $[\hat{f}_{n_i},\hat{g}_{n_i}] =0.$

\vspace*{0.18cm}
{\bf 11.} Now suppose there is relation $R_{n_i}$ such that $R_{n_i} (f_{n_i },g_{n_i}) = 0 $ for each $i$. And because the algebra of generic matrices $M_n$ is a domain, then $R_{n_i}$ is irreducible. And so for $n_j > n_i$ if $R_{n_i} (f_{n_i} , g_{n_i}) = 0 $ then we have $R_{n_j} (f_{n_j},g_{n_j}) = 0 $ and hence $R_{n_j}$ is divisible by $R_{n_i}$ and so we will have that $R_{n_j} = R_{n_i}$.

\vspace*{0.18cm}
{\bf 12.} Now consider an arbitrary relation $R$ such that $\text{deg} R(f,g) = q$ for $n_i > q,$ then $R(f,g) = 0$ in associative algebra, otherwise it is identity because of Levi's lemma (minimal degree of identity).\cite{Amit50}

\hspace*{-0.1cm}And we note that the target is to prove that for $f,g$ and hence for big $n_i$, $\textbf{k}[f^{n_i},g^{n_i}] \simeq \textbf{k}[x,y]$ and this means that for big $n_i$, $f^{n_i}$ and $g^{n_i}$ does not produce new relations.
And hence if we suppose $\textbf{k}[f,g] \simeq \textbf{k}[x,y]$, then for big $n$, $\textbf{k}[f^n,g^n]$ is still isomorphic to $\textbf{k}[x,y]$.

\vspace*{0.18cm}
{\bf 14.} Now we want to prove that if $\textbf{k}[ f,g] \simeq \textbf{k}[x,y]$ then it is also correct for $X_{1}^{n},\cdots,X_{s}^{n}$ the generators of the algebra of generic matrices $(X_{ij}^{l})$ for $l \in \{ 1,\dots,s\}$ and $i,j \in \{ 1,\dots,n \}$.

\vspace*{0.18cm}
{\bf 15.} And now if we replace $X^l$ by $X$ and $\hat{f}, \hat{g}$ by $f, g$ respectively, then from one hand we have that $\hat{f}$ and $\hat{g}$ commute because in our free associative algebra we had that $f$ and $g$ have been commuting and from the other hand we shall prove that they don't commute because of theorem 1.1! And this will provide us again a contradiction!

\vspace*{0.18cm}
{\bf 16.} We proved that if $f$ and $g$ commute then $\mathrm{\textit{tr.deg}} \textbf{k} \langle f,g\rangle = 1.$

Then one can prove according to the Bergman's centralizer theorem that for some $\mathfrak{h}$, $f = \varphi(\mathfrak{h})$ and $g = \psi(\mathfrak{h})$ for some polynomials $\varphi$ and $\psi$.

\begin{rem}
	Here again let us mention that our first goal in this article is to show that theorem 1.1 implies the Bergman's theorem on centralizers and then to conclude and to prove the direct relation between the Bergman's theorem and the Kontsevich quantization in a classic way!
\end{rem}

 \begin{rem}
 	This proof is important because it shows that staff with quantization are related to some classical results of 70's from Bergman's results on commutative algebras!
 \end{rem}

\section{Algebra of Generic matrices}
Let $A= \textbf{k}\langle x_1,\dots,x_s\rangle$ be a free associative algebra over the ground field $\textbf{k}$ on the variables $x_1,x_2,\dots,x_s$.

\begin{defn}
	A \textit{generic matrix} is a matrix whose entries are distinct commutative indeterminates, and the so called algebra of generic matrices of order $m$ is generated by associative generic $m \times m$ matrices.
\end{defn}
 
 In order to prove the main results we recall some useful stuffs about generic matrices.
 
  Consider the ring of generic matrices of the free associative algebra $A$, Let $X_1,\dots,X_s$ be $n\times n$ matrices with entries $x_{ij}^{v}$ which are independent central variables. The subring of the matrix algebra $M_n(\textbf{k}[x_{ij}^{v}])$ generated by the matrices $X_v$ is called \textbf{the algebra of generic matrices}\cite{Artin99}, which we denote it by $R=\textbf{k}\langle X_1,\dots,X_s\rangle$.  

The algebra of generic matrices is prime, and every prime, relatively free, finitely generated associative $PI-$algebra is isomorphic to an algebra of generic matrices. And if we include taking traces as an operator in the signature, then we get the algebra of generic matrices with trace.

\begin{defn}
	A \textit{T-ideal} is a completely characteristic ideal, i.e., stable under any endomorphism.
\end{defn}

We note that there is a natural homomorphism from the free associative algebra to this algebra of generic matrix with some kernel $T$-ideal:

\begin{equation}
\pi: \textbf{k}\langle x_1,\dots,x_s\rangle \to \textbf{k}\langle X_1,\dots,X_s\rangle
\end{equation}

We will firstly show that any two commuting elements in the free associative algebra also commute in some algebra of generic matrices.

\begin{lem}
	If we have a commutative subalgebra of transcendent degree 2 in the free associative algebra, then we also have a commutative subalgebra of transcendent degree two if we consider a reduction to generic matrices of a big enough order $n$.
\end{lem}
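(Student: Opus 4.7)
The plan is to analyze the evaluation homomorphism $\varphi_n : \textbf{k}[y,z] \to R_n$, $y \mapsto \pi_n(f)$, $z \mapsto \pi_n(g)$, where $\pi_n : A \to R_n$ denotes the canonical projection of the free algebra $A$ onto the algebra $R_n$ of generic $n \times n$ matrices, and $f,g \in A$ are the given commuting, algebraically independent elements. Since $\pi_n$ is a ring homomorphism, $\pi_n(f)$ and $\pi_n(g)$ automatically commute, so the subalgebra they generate in $R_n$ is commutative; the substance of the lemma is that for some (equivalently all sufficiently large) $n$, this subalgebra has transcendence degree $2$, i.e. $J_n := \ker \varphi_n = (0)$.

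I would first record three structural properties of the chain of ideals $(J_n)_{n \ge 1}$ in $\textbf{k}[y,z]$. First, each $J_n$ is prime, because $R_n$ is a domain by the classical theorem of Amitsur on generic matrix algebras, and the image of a commutative ring in a domain is a domain. Second, the chain is decreasing, $J_1 \supseteq J_2 \supseteq \cdots$, since the block embedding $M_n(\textbf{k}) \hookrightarrow M_{n+1}(\textbf{k})$ turns any polynomial identity of $(n+1) \times (n+1)$ matrices into an identity of $n \times n$ matrices, giving $T_{n+1} \subseteq T_n$ for the T-ideals of matrix identities and hence $J_{n+1} \subseteq J_n$. Third, $\bigcap_n J_n = (0)$: any $P$ in the intersection satisfies $P(f,g) \in \bigcap_n T_n = 0$ (the free associative algebra is residually finite-dimensional, a standard fact of PI-theory), and algebraic independence of $f,g$ then forces $P = 0$.

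The proof concludes by a Krull-dimension argument in $\textbf{k}[y,z]$, which is a Noetherian UFD of dimension $2$. The heights of the primes $J_n$ lie in $\{0,1,2\}$ and strictly decrease whenever an inclusion $J_{n+1} \subsetneq J_n$ is proper, so the sequence of heights stabilizes at some $h \in \{0,1,2\}$. If $h = 2$, the $J_n$ are eventually equal to a common maximal ideal (distinct maximal ideals in a polynomial ring never contain one another), which would then lie in $\bigcap_n J_n = 0$, a contradiction. If $h = 1$, each $J_n$ for large $n$ is principal, $J_n = (P_n)$ with $P_n$ irreducible, and the containment $(P_{n+1}) \subseteq (P_n)$ forces $P_n \mid P_{n+1}$; by irreducibility $P_{n+1}$ and $P_n$ are associates, so the $J_n$ stabilize at a nonzero principal prime, again contradicting $\bigcap_n J_n = 0$. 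Therefore $h = 0$, i.e., $J_n = (0)$ for all sufficiently large $n$, which is precisely the algebraic independence of $\pi_n(f)$ and $\pi_n(g)$, completing the proof.

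The main subtle ingredient is the identity $\bigcap_n T_n = 0$, equivalent to the statement that the free associative algebra embeds into the product of its images in all generic matrix algebras via generic substitutions. This is a standard result in PI-theory and I would cite it rather than reprove; granted this, the rest is a short prime-height manipulation.
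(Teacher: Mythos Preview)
Your argument is correct and is essentially the paper's own proof: both use Amitsur's theorem that the generic matrix algebra is a domain to see that the kernels $J_n\subset\textbf{k}[y,z]$ are prime, observe that the chain $(J_n)_n$ is decreasing and therefore must stabilize if it never hits zero (you organize this via heights in the two-dimensional UFD, the paper argues directly that an irreducible generator $P_n$ divides $P_N$ and hence coincides with it), and then contradict $\bigcap_n T_n=0$. The only difference is in this last step: the paper makes it explicit by invoking the Amitsur--Levitzki minimal-degree bound (so $P(f,g)$, having degree $l$, cannot be an identity of $M_l$), which also gives an effective choice of $n$, whereas you cite $\bigcap_n T_n=0$ as a standard black box from PI-theory.
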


\begin{proof}
	Assume that $f,g$ are two elements in $A$ generating a commutative subalgebra of transcendent degree two. Suppose $\bar{f},\bar{g}$ are images of homomorphism $\pi$ in $R$ of $f$ and $g$. If $\mathrm{\textit{tr.deg}}\textbf{k}[\bar{f},\bar{g}]=1$, then there exist a polynomial $P_n(\bar{f},\bar{g})=0.$ Recall that in any reduction, if $f,g$ commute in an algebra, then their images $\bar{f},\bar{g}$ in a factor algebra also commute. If $N>n$, then the matrix algebra $M_{n}(\bar{\textbf{k}})$ is a factor of $M_{N}(\bar{\textbf{k}})$, hence $P_N(\bar{f},\bar{g})=0$ in $M_n$.  By Amitsur's theorem (Refer to \cite{Row}, Theorem 3.26, p.176), any algebra of generic matrices is a domain. Then the subalgebra of generic matrices generated by $\bar{f},\bar{g}$ is also a domain. Then $P_n$ divides $P_N$, hence coincides. So there exists a minimal polynomial $P$, such that $P(\bar{f},\bar{g})=0$ for any reduction on $M_n(\bar{\textbf{k}})$.
	
	Suppose $l=\deg P(\bar{f},\bar{g})\leq\deg P\cdot \max\deg(f,g)$, consider reduction to $M_l$, by Amitsur-Levitzki Theorem\cite{Amit50}, $P(f,g)\neq0$ in free associative algebra and its minimal identity of $M_l$ has exactly degree $2l$. Hence $P(f,g)\neq 0$ in $M_l$. Contradiction!
\end{proof}

\section{Quantization}
Let $f,g$ be two commuting generic matrices which embedding to $M_n(\mathbf R)$ with $\mathrm{\textit{tr.deg}}(f,g)=2$, where $n\geq \max\{\deg f,\deg g\}$. Note that $f,g$ are not central polynomials, since $[f,x]$ and $[g,x]$ are not identities because of Amitsur-Levitzki Theorem.

Since the algebra of generic matrix is a domain embedding in a skew field, so the the minimal polynomial for $f$ (resp. g) is an irreducible polynomial. If $f$(resp. $g$) is not central, then eigenvalues of $f$(resp. $g$) are distinct and since the algebra of generic matrices is a domain by Amitsur's theorem, then $f$ could be diagonalizable in some extension $S$ of $\mathbf R$.

This is straightly coming from the result for linear algebra that any $n\times n$ matrix is diagonalizable over the field $\textbf{k}$ if it has n distinct eigenvalues in $\textbf{k}$. Note that if $f$ is diagonalizable, then $g$ is also diagonalizable since they commute.

Consider elements corresponding to $f,g$ in $S[[\mathfrak{h}]]$ which is a $k[[\mathfrak{h}]]$-module of formal power series with coefficients in $S$. Let $*$ be formal deformation\cite{Kell03} of multiplication of $S$, i.e. a $k[[\mathfrak{h}]]$-bilinear map such that we have $$u*v\equiv uv \mod \mathfrak{h} S[[\mathfrak{h}]]$$ for all power series $u,v\in S[[\mathfrak{h}]]$. Hence, the product of two elements $a,b$ of $S$ is then of the form $$a*b=ab+B_{1}(a,b)\mathfrak{h}+\cdots+B_n(a,b)\mathfrak{h}^n+\cdots$$ for a sequence of $k$-bilinear maps $B_i$, after putting $B_0(a,b)=ab$ and we could write $$*=\sum_{n=0}^{\infty}B_n\mathfrak{h}^n.$$
Then $(S,\{,\})$ has free Poisson structure, and there is a natural homomorphism $q: S\to S[[\mathfrak{h}]]$. Note that we have

\begin{equation}
\frac{1}{\mathfrak{h}}([u,v]_*-[u,v])\equiv \{u,v\} \mod \mathfrak{h}
\end{equation}

The notation $[u,v]_*$ is the commutator after quantization which equals to $u*v-v*u$. After introduce quantization, we claim that $f$ (resp. $g$) is also diagonalizable after quantization,

\begin{lem}
	Every generic matrix with distinct eigenvalues after quantization can be diagonalized over some finite extension $S[[\mathfrak{h}]]$ via conjugation modulus $\mathfrak{h}^2$. More precisely, let $A=A_0+\mathfrak{h} A_1$, where $A_0$ has distinct eigenvalues, then $A$ can be diagonalized over some finite extension of  $S[[\mathfrak{h}]]$.
\end{lem}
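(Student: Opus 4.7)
My plan is to diagonalize $A$ by a Hensel-style iteration in powers of $\mathfrak{h}$, using as the only essential input the invertibility of the commutator operator $\mathrm{ad}(A_0)\colon M \mapsto [A_0,M]$ on off-diagonal matrices once $A_0$ has been put in diagonal form.

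First I would enlarge $S$ to a finite extension $S'$ that splits the characteristic polynomial of $A_0$. Since $A_0$ has pairwise distinct eigenvalues this polynomial is separable over $S$, so such an $S'$ is finite. Conjugating by the $S'$-diagonalising matrix of $A_0$, we may assume $A_0 = \mathrm{diag}(\lambda_1,\dots,\lambda_n)$ with every difference $\lambda_i - \lambda_j$ a unit in $S'$ for $i \neq j$. This conjugation takes place at order zero in $\mathfrak{h}$, so the shape of the higher-order terms of $A$ plays no role in this step.

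Next I would seek a conjugating matrix $P = I + \mathfrak{h}P_1 + \mathfrak{h}^2 P_2 + \cdots \in \mathrm{GL}_n(S'[[\mathfrak{h}]])$ together with a diagonal matrix $D = A_0 + \mathfrak{h} D_1 + \mathfrak{h}^2 D_2 + \cdots$ such that $AP = PD$. Writing $A = \sum_{k \ge 0} \mathfrak{h}^k A_k$ and matching the coefficient of $\mathfrak{h}^k$ for $k \ge 1$ yields an equation of the schematic form
\[
[A_0, P_k] - D_k \;=\; -A_k + R_k\bigl(A_{<k}, P_{<k}, D_{<k}\bigr),
\]
in which $R_k$ collects only terms of strictly lower order in $\mathfrak{h}$. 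The diagonal part of this equation uniquely determines $D_k$, since $[A_0,P_k]$ is purely off-diagonal; the off-diagonal part then solves for $P_k$ entrywise via $(\lambda_j - \lambda_i)(P_k)_{ij} = (\mathrm{RHS})_{ij}$, which is possible precisely because $\lambda_i \neq \lambda_j$. The diagonal entries of $P_k$ may be normalised to zero. Iterating assembles $P$ and $D$ in $M_n(S'[[\mathfrak{h}]])$, and since $P \equiv I \pmod{\mathfrak{h}}$ it automatically lies in $\mathrm{GL}_n(S'[[\mathfrak{h}]])$, giving $P^{-1}AP = D$ diagonal as required; in particular the case $k=1$ already yields a diagonalisation modulo $\mathfrak{h}^2$.

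The main obstacle --- indeed essentially the only substantive point --- is the invertibility of $\mathrm{ad}(A_0)$ on off-diagonal matrices, which would collapse the moment two of the $\lambda_i$ coincided; this is where the distinct-eigenvalue hypothesis is indispensable. A minor bookkeeping issue is verifying inductively that $R_k$ lies in the appropriate range so that the equation is consistent, which follows by order counting. If ``conjugation'' in the statement is meant with respect to the $*$-product rather than the ordinary matrix product, the induction carries through unchanged, because $*$ agrees with ordinary multiplication modulo $\mathfrak{h}$, so the leading commutator $[A_0,P_k]$ in each order-$\mathfrak{h}^k$ equation is unaltered and any deformation correction is absorbed into the remainder $R_k$.
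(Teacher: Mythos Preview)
Your proposal is correct and follows essentially the same approach as the paper: first diagonalize $A_0$ over a finite extension, then conjugate by $I+\mathfrak{h}T$ and solve $[T,A_0]=-A_1$ entrywise using the distinctness of the $\lambda_i$, extending to all orders by successive approximation. Your write-up is in fact considerably more careful than the paper's own sketch, particularly in separating the diagonal and off-diagonal parts at each order and in addressing the $*$-product versus ordinary-product issue.
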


\begin{proof}
	Without loss of generality, suppose $A_0$ is a diagonal generic matrix with distinct eigenvalues, assume $B=E+\mathfrak{h} T$ and the conjugation inverse $B^{-1}=E- \mathfrak{h}T$, (where $E$ is the identity matrix.) then we have
	$$(E+\mathfrak{h}T)(A_0+\mathfrak{h} A_1)(E-\mathfrak{h}T)=A_0+\mathfrak{h}([T,A_0]+A_1)  \mod \mathfrak{h}^2,$$ then we just need to solve the equation $[T,A_0]=- A_1$.
	
	We suppose that $A_1$ is the matrix with all $0$ diagonal elements. Let $T=(t_{ij}), A_0=\mathrm{diag}\{\lambda_1,\dots,\lambda_n\}$, then $$[T,A_0]=(e_{ij}\lambda_i-\lambda_j e_{ji}).$$ We could determine all elements in $T$ by successive approximation on modulus $\mathfrak{h},\mathfrak{h}^2,\mathfrak{h}^3$ etc.
\end{proof}

Next we will show that two commuting generic matrices $f,g$ with $\mathrm{\textit{tr.deg}} (f,g)=2$ don't commute after quantization.

\begin{lem}
	If $f,g$ are two commuting generic matrices $f, g$ with $\mathrm{\textit{tr.deg}}(f,g)=2$, then after quantization the generic matrices representations of $f,g$ don't commute, i.e. $\frac{1}{\mathfrak{h}}[\hat{f},\hat{g}]_*\neq0 \mod \mathfrak{h}$.
\end{lem}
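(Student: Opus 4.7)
The plan is to simultaneously diagonalize $\hat{f}$ and $\hat{g}$ modulo $\mathfrak{h}^2$, reduce the star commutator to a diagonal matrix whose entries are Poisson brackets of the eigenvalues, and then rule out simultaneous vanishing of these brackets using the transcendence degree hypothesis together with irreducibility of the characteristic polynomials.

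First I would invoke the preceding diagonalization lemma. Since $f$ and $g$ commute and each has distinct eigenvalues (their minimal polynomials are irreducible because the algebra of generic matrices is a domain, and neither is central by Amitsur--Levitzki), they are simultaneously diagonalizable over a finite extension $S$ of $\mathbf R$. Applying that lemma to a joint conjugation $B = E + \mathfrak{h}T$ brings $\hat{f}$ and $\hat{g}$ to the forms $\mathrm{diag}(\hat{\lambda}_1,\dots,\hat{\lambda}_n)$ and $\mathrm{diag}(\hat{\mu}_1,\dots,\hat{\mu}_n)$ modulo $\mathfrak{h}^2$, with $\hat{\lambda}_i \equiv \lambda_i$ and $\hat{\mu}_i \equiv \mu_i$ modulo $\mathfrak{h}$. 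Because the star product deforms only the multiplication of entries (not the matrix indices), a direct computation in this basis gives
$$[\hat{f},\hat{g}]_* \equiv \mathrm{diag}\bigl([\hat{\lambda}_1,\hat{\mu}_1]_*,\ldots,[\hat{\lambda}_n,\hat{\mu}_n]_*\bigr) \pmod{\mathfrak{h}^2},$$
and the quantization identity $\frac{1}{\mathfrak{h}}[u,v]_* \equiv \{u,v\} \pmod{\mathfrak{h}}$ yields
$$\frac{1}{\mathfrak{h}}[\hat{f},\hat{g}]_* \equiv \mathrm{diag}\bigl(\{\lambda_1,\mu_1\},\ldots,\{\lambda_n,\mu_n\}\bigr) \pmod{\mathfrak{h}},$$
which is precisely the formula of Theorem 1.1.

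It then remains to show that at least one $\{\lambda_i,\mu_i\}$ is nonzero. Because the characteristic polynomials of $f$ and $g$ are irreducible over $\mathbf R$, the Galois group of $S/\mathbf R$ acts transitively on the eigenvalues, so the scalars $\{\lambda_i,\mu_i\}$ are Galois conjugate and therefore vanish simultaneously or not at all. Assume they all vanish. Then each pair $(\lambda_i,\mu_i)$ Poisson commutes, so every polynomial expression in these pairs does as well; in particular the elementary symmetric functions of the $\lambda_i$'s and of the $\mu_i$'s, which recover the coefficients of the characteristic polynomials of $f$ and $g$ as polynomials in the entries of $f$ and $g$, all Poisson commute. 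Combined with the hypothesis $\mathrm{tr.deg}\,\mathbf{k}\langle f,g\rangle = 2$ and the non-degeneracy of the free Poisson structure inherited from Kontsevich's quantization of the generic matrix algebra, this exhibits two algebraically independent elements with vanishing Poisson bracket, contradicting the freeness of the deformation.

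The main obstacle is this last step: one must convert the transcendence degree hypothesis into genuine algebraic independence at the level of eigenvalue pairs, and then invoke non-degeneracy of the Poisson structure on $S$ (obtained by extending the one on $\mathbf R$ along the finite algebraic extension) to rule out Poisson commuting. The diagonalization and the identification with $\mathrm{diag}(\{\lambda_i,\mu_i\})$ are essentially bookkeeping, but the non-vanishing argument depends essentially on the specific structure of the Kontsevich quantization of the free associative algebra, and is the step where the hypothesis $\mathrm{tr.deg}(f,g)=2$ actually does its work.
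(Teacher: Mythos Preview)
Your overall strategy matches the paper's: reduce to a basis where $f,g$ are diagonal modulo $\mathfrak{h}$, identify the diagonal of $\frac{1}{\mathfrak{h}}[\hat f,\hat g]_*$ with the Poisson brackets $\{\lambda_i,\mu_i\}$, and then argue non-vanishing from $\mathrm{tr.deg}=2$. The difference is in how you reach the diagonal computation, and there your argument has a gap.

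You invoke the preceding lemma to produce a \emph{single} conjugation $B=E+\mathfrak{h}T$ that simultaneously diagonalizes $\hat f$ and $\hat g$ modulo $\mathfrak{h}^2$. The lemma does not give this: it diagonalizes one matrix at a time by solving $[T,A_0]=-A_1$, and the resulting $T$ is determined by $A_0,A_1$. For two matrices you would need $T_{ij}(\lambda_j-\lambda_i)=-(\hat f_1)_{ij}$ and $T_{ij}(\mu_j-\mu_i)=-(\hat g_1)_{ij}$ simultaneously for all $i\neq j$, which is overdetermined and has no reason to be consistent. (If it were automatically consistent, $\hat f$ and $\hat g$ would $*$-commute modulo $\mathfrak{h}^2$ for formal reasons, and the lemma would be vacuous.) The paper does \emph{not} attempt this joint diagonalization. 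Instead it writes, in the basis diagonalizing $f_0,g_0$,
\[
\hat f=\mathrm{diag}(\lambda_i+\mathfrak{h}\delta_i)+\mathfrak{h}\,N_f,\qquad \hat g=\mathrm{diag}(\mu_i+\mathfrak{h}\nu_i)+\mathfrak{h}\,N_g,
\]
with $N_f,N_g$ strictly off-diagonal, expands $\frac{1}{\mathfrak{h}}[\hat f,\hat g]_*$ modulo $\mathfrak{h}$, and observes that every cross term of the form $\mathrm{diag}\cdot N$ or $N\cdot\mathrm{diag}$ or $\{N_f,N_g\}$ has zero diagonal. Only the term $\mathrm{diag}(\{\lambda_i,\mu_i\})$ survives on the diagonal, which is exactly the content of Theorem~1.1. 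You should replace the unjustified joint diagonalization by this bookkeeping; it is the actual mechanism.

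On the non-vanishing of $\{\lambda_i,\mu_i\}$: the paper's proof of the lemma simply asserts the diagonal is nonzero, deferring the reason to Remark~1.2 (algebraic independence of $\lambda_i,\mu_i$ for some $i$, forced by $\mathrm{tr.deg}=2$ and irreducibility of the characteristic polynomials). Your Galois-conjugacy reduction and appeal to non-degeneracy of the free Poisson structure is a reasonable elaboration of that remark, and you are right to flag it as the step where the hypothesis genuinely enters; just be aware that the paper itself treats this point more tersely than you do.
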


\begin{proof}
	We have showed that $\hat{f},\hat{g}$ could be diagonalized in $S[[\mathfrak{h}]]$, i.e. we could write them into specific forms modulus $\mathfrak{h}^2$ as follows:
	
	\begin{equation*}
	\hat{f}=\begin{pmatrix}
	\lambda_1+\mathfrak{h} \delta_1 & & 0 \\
	& \ddots & \\
	0 & &\lambda_n+\mathfrak{h}\delta_n\\
	\end{pmatrix}+\mathfrak{h} \begin{pmatrix}
	0 & & * \\
	& \ddots & \\
	* & &0\\
	\end{pmatrix}
	\end{equation*}
	
	\medskip
	
	\begin{equation*}
	\hat{g}=\begin{pmatrix}
	\mu_1+\mathfrak{h} \nu_1 & & 0 \\
	& \ddots & \\
	0 & &\mu_n+\mathfrak{h} \nu_n\\
	\end{pmatrix}+\mathfrak{h} \begin{pmatrix}
	0 & & * \\
	& \ddots & \\
	* & &0\\
	\end{pmatrix}
	\end{equation*}
	
	\medskip
	Then we can get after modulus $\mathfrak{h}$, we have
	
	\begin{align*}
	\frac{1}{\mathfrak{h}}[\hat{f}*\hat{g}-\hat{g}*\hat{f}]&=\begin{pmatrix}
	\frac{1}{\mathfrak{h}}\{\lambda_1,\mu_1\}& & 0 \\
	& \ddots & \\
	0 & &\frac{1}{\mathfrak{h}}\{\lambda_n,\mu_n\}\end{pmatrix}+\vec{\lambda}*\begin{pmatrix}
	0 & & * \\
	& \ddots & \\
	* & &0\\
	\end{pmatrix}\\
	&-\begin{pmatrix}
	0 & & * \\
	& \ddots & \\
	* & &0\\
	\end{pmatrix}*\vec{\lambda} +\begin{pmatrix}
	0 & & * \\
	& \ddots & \\
	* & &0\\
	\end{pmatrix}*\vec{\mu}-\vec{\mu}*\begin{pmatrix}
	0 & & * \\
	& \ddots & \\
	* & &0\\
	\end{pmatrix}\\
	\vspace*{-0.9cm}&+ \mathfrak{h} \left\{\begin{pmatrix}
	0 & & * \\
	& \ddots & \\
	* & &0\\
	\end{pmatrix},\begin{pmatrix}
	0 & & * \\
	& \ddots & \\
	* & &0\\
	\end{pmatrix} \right\}
	\end{align*}
	
	\medskip
	Note that all terms have empty diagonal except the first term, and hence $$\frac{1}{\mathfrak{h}}[\hat{f}*\hat{g}-\hat{g}*\hat{f}]\neq 0 \mod \mathfrak{h}.$$
\end{proof}

\section{Conclusion}

Consider the natural homomorphism $q$ from $R$ to $\hat{R}$, by sending $\cdot\mapsto *$. Then $$q\pi([f,g])=q[\bar{f},\bar{g}]=[\hat{\bar{f}},\hat{\bar{g}}]_*=0$$ which is contradict to the result that $[\hat{\bar{f}},\hat{\bar{g}}]_*\neq0$ which we just have showed above. Hence, we finish the proof of our main theorem.

Our main theorem directly implies Bergman's theorem on centralizers. Since any centralizer of $A$ is a commutative subalgebra(refer to \cite{Berg69}, Proposition 2.2, p331), hence of transcendental degree 1. So it is a commutative subalgebra with form $\textbf{k}[z]$ for some $z\in A$.

\section*{Acknowledgement}
This work was supported by the Russian Science Foundation (RNF) educational scholarship with number (\textit{No} 17-11-01377) for financial support. 

\end{document}